\theoremstyle{plain}
\newtheorem*{theorem*}{Theorem}
\newtheorem{theorem}{Theorem}[section]
\newtheorem{claim}[theorem]{Claim}
\newtheorem{proposition}[theorem]{Proposition}
\newtheorem*{claim*}{Claim}
\newtheorem{problem}[theorem]{Problem}
\theoremstyle{remark}
\DeclareMathOperator{\bin}{Bin}
\def\Cond{\,|\,}
\newcommand{\Z}{\mathbb{Z}}
\newcommand{\R}{\mathbb{R}}
\newcommand{\E}{\mathbb{E}}
\newcommand{\N}{\mathbb{N}}
\renewcommand{\P}{\mathbb P}
\newcommand{\xx}{\mathbf{x}}
\newcommand{\yy}{\mathbf{y}}
\newcommand{\WW}{\mathcal{W}}
\DeclareMathOperator\Bin{Bin}
\let\eps\varepsilon
\let\originalleft\left
\let\originalright\right
\renewcommand{\left}{\mathopen{}\mathclose\bgroup\originalleft}
\renewcommand{\right}{\aftergroup\egroup\originalright}
\begin{document}

\title{Exceptional graphs for the random walk}

\author{Juhan Aru}
\address{Departement Mathematik, ETH Z\"urich, R\"amistrasse 101, 8092, Z\"urich, Switzerland}
\email{juhan.aru@math.ethz.ch}

\author{Carla Groenland}
\address{Mathematical Institute, University of Oxford, Andrew Wiles Building, Radcliffe Observatory Quarter, Woodstock Road, Oxford OX2\thinspace6GG, UK}
\email{groenland@maths.ox.ac.uk}

\author{Tom Johnston}
\address{Mathematical Institute, University of Oxford, Andrew Wiles Building, Radcliffe Observatory Quarter, Woodstock Road, Oxford OX2\thinspace6GG, UK}
\email{johnston@maths.ox.ac.uk}

\author{Bhargav Narayanan}
\address{Department of Mathematics,	Rutgers University, Piscataway NJ 08854, USA}
\email{narayanan@math.rutgers.edu}

\author{Alex Roberts}
\address{Mathematical Institute, University of Oxford, Andrew Wiles Building, Radcliffe Observatory Quarter, Woodstock Road, Oxford OX2\thinspace6GG, UK}
\email{roberts@maths.ox.ac.uk}

\author{Alex Scott}
\address{Mathematical Institute, University of Oxford, Andrew Wiles Building, Radcliffe Observatory Quarter, Woodstock Road, Oxford OX2\thinspace6GG, UK}
\email{scott@maths.ox.ac.uk}

\date{20 July 2018}

\subjclass[2010]{Primary 05C81; Secondary 60G50}

\begin{abstract}
If $\WW$ is the simple random walk on the square lattice $\Z^2$, then $\WW$ induces a random walk $\WW_G$ on any spanning subgraph $G\subset \Z^2$ of the lattice as follows: viewing $\WW$ as a uniformly random infinite word on the alphabet $\{\xx, -\xx, \yy, -\yy \}$, the walk $\WW_G$ starts at the origin and follows the directions specified by $\WW$, only accepting steps of $\WW$ along which the walk $\WW_G$ does not exit $G$. For any fixed $G \subset \Z^2$, the walk $\WW_G$ is distributed as the simple random walk on $G$, and hence $\WW_G$ is almost surely recurrent in the sense that $\WW_G$ visits every site reachable from the origin in $G$ infinitely often. This fact naturally leads us to ask the following: does $\WW$ almost surely have the property that $\WW_G$ is recurrent for \emph{every} $G \subset \Z^2$? We answer this question negatively, demonstrating that exceptional subgraphs exist almost surely. In fact, we show more to be true: exceptional subgraphs continue to exist almost surely for a countable collection of independent simple random walks, but on the other hand, there are almost surely no exceptional subgraphs for a branching random walk.
\end{abstract}
\maketitle

\section{Introduction}
Let us say that a walk on a graph $G$ is \emph{recurrent} if the walk visits every site in the connected component of its starting point in $G$ infinitely often, and \emph{transient} otherwise. It is a classical result of Poly\'a~\citep{Pol}  that a simple random walk on the square lattice $\Z^2$ is almost surely recurrent. In this paper, we shall be concerned with how `robust' this property is in the following sense: do the coin tosses that determine a recurrent random walk on $\Z^2$ also determine a recurrent random walk on \emph{every} subgraph of $\Z^2$ simultaneously? We make this question precise below in a few different ways.

We view a simple random walk $\WW$ on $\Z^2$ as a random infinite word on the four-letter alphabet $\{\xx, -\xx, \yy, -\yy \}$, where $\xx = (1,0)$ and $\yy = (0,1)$, with each letter of $\WW$ being chosen independently and uniformly at random. For any (spanning) subgraph $G \subset \Z^2$ of the lattice, the random walk $\WW$ then induces a (random) walk $\WW_G$ on $G$: starting at the origin, we consider the letters of $\WW$ one at a time, and for each letter of $\WW$, we take a step in the appropriate direction in $G$ provided the edge in question is present in $G$, and stand still otherwise. For any \emph{fixed} $G \subset \Z^2$, it is clear that $\WW_G$ is distributed as the simple random walk on $G$, and since $G$ is a subgraph of a recurrent graph, we conclude that $\WW_G$ is almost surely recurrent. It follows immediately from Fubini's theorem that the random walk $\WW$ almost surely has the property that the induced walks $\WW_G$ are recurrent for \emph{almost all} $G \subset \Z^2$. We are then naturally led to the following question: does the random walk $\WW$ almost surely have the property that the induced walks $\WW_G$ are recurrent for \emph{all} $G \subset \Z^2$? Our first result answers this question negatively in the following strong sense.

\begin{theorem}\label{t:main}
	If $\WW$ is a simple random walk on $\Z^2$, then there almost surely exists a (random) exceptional subgraph $H \subset \Z^2$ for which the induced walk $\WW_H$ 
	\begin{enumerate} 
		\item visits each site reachable from the origin in $H$ finitely many times, and
		\item fails to visit infinitely many sites reachable from the origin in $H$.
	\end{enumerate}
\end{theorem}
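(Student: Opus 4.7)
Because for any \emph{fixed} subgraph $G\subseteq\Z^2$ the walk $\WW_G$ is almost surely recurrent (any subgraph of $\Z^2$ has infinite effective resistance to infinity by Rayleigh monotonicity, so its simple random walk is recurrent and visits every reachable vertex infinitely often), the exceptional $H$ in the statement must depend in an essential way on the realisation of $\WW$. My plan is therefore to construct $H$ explicitly as a measurable function $H=H(\WW)$ so that, for almost every $\WW$, the walk $\WW_{H(\WW)}$ has the required exceptional behaviour.

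I would proceed via an iterative ``capture'' scheme. Build $H$ through an increasing sequence $H_0\subseteq H_1\subseteq\cdots$ of finite subgraphs, determined at stopping times $0=T_0<T_1<\cdots$ of $\WW$. At each stage $k$, I would identify a \emph{favourable} event on the piece of $\WW$ between $T_{k-1}$ and $T_k$---for instance, that a full excursion of $\WW$ from the origin confines itself to a prescribed (large) region and uses only a prescribed set of boundary edges---and then extend $H_{k-1}$ to $H_k$ by adjoining precisely the edges the walk traces out in that excursion, while carefully \emph{omitting} those edges that would subsequently let it escape. The i.i.d.\ nature of the excursions of $\WW$ from the origin guarantees, via a Borel--Cantelli argument, that infinitely many favourable excursions occur almost surely, so that the sequence $H_k$ grows without bound. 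The final subgraph is $H=\bigcup_k H_k$, equipped with an infinite pendant piece (say, a ray) attached at a $\WW$-measurable boundary vertex whose incident accepted letters are, by construction, already exhausted by the excursions recorded in $\bigcup_k H_k$.

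The main obstacle is showing that the walk $\WW_H$ truly avoids the attached infinite pendant and stays confined for all time: for any \emph{fixed} attachment vertex and ray, $\WW_H$ would enter the ray almost surely, and this is precisely where the $\WW$-dependence of the construction is essential. One must arrange, by quantitative control of the excursion structure at every scale combined with a careful compatibility induction, that the specific letters of $\WW$ that would trigger an entry into the pendant simply never appear at the required times in the walk's history on $H$. This is the delicate core of the argument: making the stage-by-stage additions coherent across all scales, so that the capture established at stage $k$ is not undone by subsequent additions, and the pendant remains ``invisible'' to $\WW_H$ for the entire infinite future.
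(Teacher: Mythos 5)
Your proposal takes a genuinely different route from the paper, but it has two serious gaps that I think are fatal to the approach as sketched.

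First, and most fundamentally, the scheme rests on excursions of the \emph{ambient} walk $\WW$ from the origin being i.i.d., and on using those excursions to build $H$. But the induced walk $\WW_H$ diverges from $\WW$ the moment $\WW$ tries to cross an edge absent from $H$; from that time on the two walks are at different locations and the $\Z^2$-excursion structure of $\WW$ carries essentially no information about the trajectory of $\WW_H$. So ``adjoining precisely the edges the walk traces out in that excursion'' doesn't describe a coherent object: the relevant trace is that of $\WW_H$, which depends on the graph you are simultaneously trying to build, and the excursions of $\WW_H$ are neither i.i.d.\ nor tied to the recurrence times of $\WW$. A workable version of this idea has to reveal $H$ adaptively along $\WW_H$, as the paper does, not along $\WW$.

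Second, your plan never actually addresses conclusion (1). You focus on making $\WW_H$ avoid the pendant, which handles the ``misses infinitely many sites'' requirement, but the theorem also demands that $\WW_H$ visit each reachable site only finitely often, i.e.\ that $\WW_H$ be transient on $H$. Nothing in the ``capture'' scheme produces transience; if anything, the word ``confined'' suggests the opposite, and a walk trapped in a finite part of a subgraph of $\Z^2$ is necessarily recurrent there, making (1) false. You must \emph{engineer} transience in a walk that, for any fixed subgraph, would be recurrent. The paper does this by embedding drift: the graph $H$ is built from vertical lines $L_n$ at $x=2^n-1$ joined by single horizontal segments, and the edges are revealed adaptively so that $\WW_H$ \emph{always} accepts a rightward letter when it is in the ``new'' region; this pushes $\WW_H$ to the right and the key estimate (Claim~\ref{claim:p}, via Propositions~\ref{p:int} and~\ref{p:exp}) shows the probability of backtracking from $L_n$ to $L_{n-1}$ decays exponentially, so Borel--Cantelli gives transience. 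Your pendant/trap picture has no analogue of this drift mechanism. If you want to pursue your own line you would need to say explicitly what makes $\WW_H$ escape to infinity within $\bigcup_k H_k$; absent such a mechanism, the ``delicate core'' you identify (arranging that the pendant-entering letters never appear at the visits to the attachment vertex) is not just delicate but impossible, because a recurrent $\WW_H$ visits that vertex infinitely often and the strong Markov property then forces it into the pendant.
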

More generally, we can ask whether a countably infinite independent collection of simple random walks almost surely has the property that, for every $G\subset\Z^2$, at least one of the walks in this collection induces a recurrent walk on $G$. An extension of the proof of Theorem~\ref{t:main} allows us to prove the following result, which answers this question negatively as well.
\begin{theorem}\label{t:countable}
	If $\{\WW_i\}_{i \in \mathbb{N}}$ is a collection of independent simple random walks on $\Z^2$, then there almost surely exists a (random) exceptional subgraph $H \subset \Z^2$ so that, for every $i\in \mathbb{N}$, the induced walk $(\WW_i)_H$ 
	\begin{enumerate} 
		\item visits each site reachable from the origin in $H$ finitely many times, and
		\item fails to visit infinitely many sites reachable from the origin in $H$.
	\end{enumerate}
\end{theorem}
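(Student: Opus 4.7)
The plan is to extend the proof of Theorem~\ref{t:main} to the multi-walk setting via a diagonal interleaving. Presumably, the proof of Theorem~\ref{t:main} builds the exceptional subgraph $H(\WW)$ as a union of local ``trap gadgets'' supported in disjoint boxes $(B_k)_{k \ge 1}$ at scales growing like $2^k$ around the origin: the gadget in $B_k$ is installed precisely when $\WW$ exhibits some prescribed short move pattern $P_k$ upon first entering $B_k$, the activation events for distinct $k$ are (essentially) independent with activation probabilities bounded below, and the resulting $H$ satisfies the two exceptional properties for $\WW$ via a Borel--Cantelli argument.

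To prove Theorem~\ref{t:countable}, fix a bijection $\phi : \mathbb{N} \to \mathbb{N} \times \mathbb{N}$, $\phi(n) = (i(n), k(n))$, chosen so that every index $i \in \mathbb{N}$ appears as $i(n)$ for infinitely many $n$, with unbounded values of $k(n)$. Tile $\Z^2$ by disjoint boxes $(B_n)_{n \ge 1}$ at growing scales, and install the scale-$k(n)$ gadget inside $B_n$ exactly when walk $\WW_{i(n)}$ exhibits the pattern $P_{k(n)}$ on first entering $B_n$ (otherwise leave $B_n$ as the full lattice). Let $H$ be the resulting random subgraph. For each fixed $i$, the activations across boxes with $i(n) = i$ form an independent family with probabilities bounded below in $n$, so Borel--Cantelli gives almost surely infinitely many trap gadgets tailored to $\WW_i$; running the argument from Theorem~\ref{t:main} on these activations yields both exceptional properties for $(\WW_i)_H$. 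Intersecting the countable family of almost-sure events over $i$ produces a single $H$ that is exceptional for every $\WW_i$ simultaneously.

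The main obstacle is cross-interference between gadgets designed for different walks: a gadget installed in $B_n$ to snare $\WW_{i(n)}$ deletes edges, and these deletions could either (a) disconnect the origin from regions it needs to reach in $H$, shrinking its connected component to a finite set and rendering some $(\WW_j)_H$ trivially recurrent on that finite component, or (b) distort the entry distribution of a different walk $\WW_j$ into its assigned trap boxes, damaging the conditional independence needed for Borel--Cantelli. To address this, the gadgets from Theorem~\ref{t:main} should be chosen to be ``topologically mild'': each box $B_k$ remains connected to its exterior through a wide boundary, only interior edges are removed, and the induced harmonic measure on $\partial B_k$ is preserved up to a constant factor. Verifying that the single-walk proof can be realised with such mild gadgets, and that the marginal behaviour of each $\WW_i$ near its own trap boxes in the modified graph $H$ remains comparable to its behaviour on $\Z^2$, is where most of the technical work lies; granting this robustness, the walk-by-walk Borel--Cantelli argument completes the proof of Theorem~\ref{t:countable}.
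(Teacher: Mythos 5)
Your proposal proceeds from an incorrect guess about how the single-walk exceptional subgraph is built, and this guess drives everything that follows. The paper's construction of $H$ in Theorem~\ref{t:main} is not a union of independently activated ``trap gadgets'' placed in disjoint boxes and triggered by prescribed entry patterns. Rather, $H$ consists of fixed vertical lines $L_n$ at $x$-coordinates $2^n-1$, joined by adaptively revealed horizontal segments between consecutive lines: during stage $n$, whenever the induced walk sits at or to the right of $L_n$, the revelation rule guarantees that any rightward step of $\WW$ is accepted. This endows $\WW_H$ with a global drift to the right, and the backtracking events $E_n$ are shown to have exponentially small probability by an excursion decomposition combined with the gambler's-ruin estimate of Proposition~\ref{p:int}. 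There are no disjoint boxes, no fixed patterns $P_k$, and no independence between ``activation events'' — the construction is fully adaptive and the analysis uses the strong Markov property, not independence across scales.

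Because the single-walk model you hypothesised is wrong, the interleaving argument you build on top of it does not engage with the actual proof, and the cross-interference obstacle you flag is a real hole in your proposed scheme, not a routine technicality: deletions made to trap $\WW_{i(n)}$ in $B_n$ genuinely could disconnect other walks or distort their entry laws, and your remedy (``topologically mild'' gadgets preserving harmonic measure on box boundaries up to constants) is neither constructed nor verified. You acknowledge that checking this ``is where most of the technical work lies,'' which means the proposal is not a proof. The paper sidesteps the issue entirely by staging the same drift construction across walks: having defined $H$ up to $L_i$ using $\WW_1,\ldots,\WW_i$, one runs $\WW_{i+1}$ on the already-fixed portion of $H$ until it first hits $L_i$, then applies the drift-inducing revelation rule simultaneously to all $i+1$ walks in the strip between $L_i$ and $L_{i+1}$, introducing one new walk per stage. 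The resulting backtracking events satisfy $\P(E_{n,i}) \le n c^n$ for $n \ge i$, and a single Borel--Cantelli application over the doubly-indexed family finishes the argument. Since every walk experiences the same global rightward drift from a single shared graph structure, there are no competing per-walk gadgets to reconcile, and the per-walk independence and harmonic-measure stability you were trying to engineer are simply not needed.
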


What about an uncountable collection of simple random walks? To avoid measurability issues, we need to be careful about how we phrase such a question. One natural formulation is in the language of branching random walks. A \emph{branching random walk} on $\Z^d$ starts with a single particle at the origin. At each time step, each particle independently generates a number of additional particles at its current location according to some fixed \emph{offspring distribution}, and we say that this offspring distribution is \emph{nontrivial} if the number of offspring is nonzero with positive probability. Independently of the other particles and their history, all particles then take a step in a direction chosen uniformly at random, which leads to a random family of dependent simple random walks. In this language, one can then ask whether a branching random walk on $\Z^2$ almost surely has the property that, for every $G\subset\Z^2$, at least one of the branches of the branching random walk induces a recurrent walk on $G$. Our final result answers this question positively, and furthermore, shows that the same is true in dimensions greater than two as well.
\begin{theorem}\label{t:branching}
Fix  $d \in \N$ and let $(\WW_i)_{i \in S}$ be a family of random walks generated by a branching random walk on $\Z^d$ with a nontrivial offspring distribution. Then, almost surely, for every (spanning) subgraph $G \subset \Z^d$, there is some $j \in S$ for which the induced walk $(\WW_j)_G$ is recurrent.
\end{theorem}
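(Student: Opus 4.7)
The plan is to exploit the self-similar, regenerative structure of the BRW: the strong Markov property at each branching event produces independent sub-BRWs, and non-triviality of the offspring distribution guarantees exponential population growth, yielding infinitely many independent ``fresh starts'' from every visited vertex. This should prevent any single $G$ from simultaneously failing all branches, in contrast to the countable family of independent walks in Theorem~\ref{t:countable}, where the construction of an adversarial $G$ exploits access to the full realization of each walk.

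First, I would reduce to a vertex-by-vertex statement: write $C(G)$ for the component of the origin in $G$, and observe that, by countability of $\Z^d$, it suffices to prove that for each fixed $v \in \Z^d$, almost surely every $G$ with $v \in C(G)$ admits some branch whose induced walk visits $v$ infinitely often. A subsequent diagonal/compactness argument on the BRW genealogical tree would then, for each $G$, extract a single branch visiting every $v \in C(G)$ infinitely often.

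Fixing such a $v$, I would iteratively apply the strong Markov property at successive visits of the BRW to $v$: each such visit spawns an independent sub-BRW rooted at $v$, and a second-moment computation (exploiting $\E N_t = (1 + \E X)^t$ against the local CLT estimate $\P(\text{SRW at time } t \text{ equals } v) \sim t^{-d/2}$) shows that almost surely infinitely many particles visit $v$. These give an i.i.d.\ sequence of sub-BRWs rooted at $v$, each furnishing an independent chance to produce a branch returning to $v$ infinitely often through $G$-moves.

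The main obstacle is the uncountable quantification over $G$: for each fixed $G$ every branch is almost surely recurrent on $C(G)$ (its induced walk is SRW on $G$), but one cannot union-bound over the uncountable family of $G$'s. My strategy is to establish a \emph{uniform-in-$G$} lower bound on the probability that a single sub-BRW rooted at $v$ produces a branch visiting $v$ infinitely often through $G$ --- a bound depending only on the offspring distribution. Combined with Borel--Cantelli across the i.i.d.\ sub-BRWs at $v$, this would yield almost-sure success. Such a uniform lower bound is plausible because $v \in C(G)$ forces $G$ to contain at least one edge incident to $v$, giving any sub-BRW a positive-probability local route to return to $v$ regardless of $G$'s global structure; making this ``local return probability'' rigorously independent of $G$'s geometry, and thereby decoupling the sub-BRW's fresh randomness from any $G$-construction, is the technical heart of the argument and the step I expect to be the hardest.
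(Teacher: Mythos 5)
Your plan diverges fundamentally from the paper's argument and, as sketched, contains two genuine gaps.

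First, the statement \enquote{for each fixed $G$ every branch is almost surely recurrent on $C(G)$ (its induced walk is SRW on $G$)} is false for $d\ge 3$: a fixed branch induces a simple random walk on $G$, and when $G$ is a transient subgraph of $\Z^d$ (for example $G=\Z^3$) that walk is almost surely transient. The whole content of the theorem in high dimension is that the branching provides enough branches that some (very atypical) one is recurrent; your plan's phrasing suggests a misconception about which event is being targeted. Relatedly, the heuristic that \enquote{$v\in C(G)$ forces an edge incident to $v$, giving a local route back to $v$} only controls a single return, and a positive probability of a single return says nothing about the probability that a sub-BRW rooted at $v$ contains a branch visiting $v$ \emph{infinitely often} --- the latter quantity genuinely depends on the global geometry of $G$, which is exactly the difficulty.

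Second, and more structurally: even if you had a uniform lower bound $p_0>0$ on the probability that a sub-BRW rooted at $v$ produces an infinitely-returning branch through $G$, Borel--Cantelli across the i.i.d.\ sub-BRWs at $v$ would give, for each \emph{fixed} $G$, an almost-sure success --- but the exceptional null set may depend on $G$, so this does not quantify over the uncountably many $G$'s. You correctly identify the uncountable quantification as the main obstacle, but the uniform lower bound does not by itself resolve it; some discretisation is needed. The paper's proof does exactly that: it works in stages over a rapidly increasing sequence of times $T_\ell$, observes that whether a given walk succeeds up to time $T_\ell$ depends only on the (finite) restriction of $G$ to the box $[-T_\ell,T_\ell]^d$, union-bounds over the $2^{d(2T_\ell+1)^d}$ such restrictions, and defeats this doubly-exponential count with the exponential growth of the particle population (Claim~\ref{cl:descendants}) and a Carne--Varopoulos bound (Claim~\ref{cl:bound}) to keep enough particles near the origin. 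It also sidesteps your vertex-by-vertex reduction entirely by having particles traverse an explicit walk $W_H$ that visits \emph{all} vertices within distance $\ell-1$ of the origin in $H$ within each stage, then selecting representative particles, so no separate diagonalisation over $v$ and no compactness argument on the genealogical tree is required. Without the finitisation step your plan does not close.
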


Our work here fits into the broader context of attempting to understand the robustness of objects such as random walks and Brownian paths, in terms of their quasi-everywhere properties or their dynamical sensitivity; see~\citep{Fuk, Kono, dyn} for example. 

Let us mention two results in this general direction that are particularly close to our results in spirit. The first relevant result is a theorem of Adelman, Burdzy and Pemantle~\citep{miss} on the projections of three-dimensional Brownian motion. The projection of Brownian motion in $\R^3$ onto any \emph{fixed} plane yields Brownian motion in that plane which is neighbourhood recurrent; Adelman, Burdzy and Pemantle~\citep{miss} however show that there almost surely exists a (random) exceptional plane on which the projection is not neighbourhood recurrent. The second result that is pertinent is a theorem of Hoffman~\citep{sens} demonstrating that recurrence of the simple random walk on $\Z^2$ is dynamically sensitive; in other words, if the coin tosses of the random walk are refreshed continuously with Poisson clocks generating a \emph{dynamic random walk}, then although dynamic random walk is almost surely recurrent at any \emph{fixed} time, there almost surely exists a (random) exceptional time at which the dynamic random walk is not recurrent. 

Although the two results mentioned above bear some similarities in flavour to our first two results, it is perhaps worth remarking that the methods of proof are somewhat different: while the results in both~\citep{miss} and~\citep{sens} are based on second moment computations, the proofs of Theorems~\ref{t:main} and~\ref{t:countable}, in contrast, proceed by explicitly `embedding drift'.

This note is organised as follows. In Section~\ref{s:proof}, we first sketch a natural approach that fails, but nonetheless motivates our construction, then we prove Theorem~\ref{t:main}, and we finally sketch how the same argument extends to prove Theorem~\ref{t:countable}. Section~\ref{s:branching} is devoted to the proof of Theorem~\ref{t:branching}. We conclude in Section~\ref{s:conc} with a discussion of some open problems.

\section{Existence of exceptional subgraphs}\label{s:proof} 
In this section, we prove Theorems~\ref{t:main} and~\ref{t:countable}. Before we do so, let us sketch a construction which, while failing to prove Theorem~\ref{t:main}, serves as the motivation for the construction in our proof. 

Suppose that $\WW$ is a simple random walk on $\Z^2$. Let us construct a (random) subgraph $P \subset \Z^2$ which exhibits some drift. We shall ensure that $P$ is an infinite non-decreasing path (i.e., a north-east path) passing through the origin, and we reveal $P$ as follows. We shall read off the letters of $\WW$ one at a time and follow the induced walk $\WW_P$ on $P$, revealing more of $P$ as and when $\WW_P$ needs to know if a particular edge is present in $P$. At any finite time, it is clear that $P$ (or rather, what has been revealed of $P$ so far) is a finite non-decreasing path through the origin, and we are forced to reveal more of $P$ at this time if and only if $\WW_P$ is at one of the leaves of $P$ and the next letter of $\WW$ would cause $\WW_P$ to exit $P$ in a non-decreasing fashion. Our strategy for constructing $P$ is then as follows: if $\WW_P$ is at the north-eastern leaf of $P$ at some stage, and the next letter of $\WW$ causes $\WW_P$ to travel either north or east, we extend $P$ so as to allow this, proceeding analogously at the south-western leaf as well. 

What can we say about the induced walk $\WW_P$ on the path $P$ as constructed above? It is not hard to see that if we identify $P$ with $\Z$ by `unrolling' it, then $\WW_P$ is a random walk on $\Z$ with the following law: if $[a,b]\subset \Z$ is the range of the walk at some time and the walk is at $x \in [a,b]$ at this time, then the walk moves to either $x-1$ or $x+1$ both with probability $1/2$, unless $x \in\{ a, b\}$, in which case, the walk moves to $b+1$ with probability $2/3$ and to $b-1$ with probability $1/3$ at $x = b$, and similarly to $a-1$ with probability $2/3$ and to $a+1$ with probability $1/3$ at $x = a$. In other words, for the path $P$ constructed as described above, the induced walk $\WW_P$ behaves like a random walk on $\Z$ with a tiny amount of drift; indeed, the walk possesses some drift away from the origin when it is at the boundary of its range, but behaves like the simple random walk in the interior of its range. Unfortunately, this tiny amount of drift does not stop $\WW_P$ from being recurrent, but this construction nevertheless demonstrates that it is possible to construct (random) subgraphs of $\Z^2$ where the induced walk possesses some drift; below, we prove Theorem~\ref{t:main} with a more careful construction that embeds more drift into the induced walk.

We need two simple facts about the simple random walk on $\Z$. First, we require the following well-known fact.
\begin{proposition}\label{p:int}
	The probability that the simple random walk on the interval $\{0,1,\dots,n\}$ started at $1$ visits $n$ before it visits $0$ is $1/n$.\qed
\end{proposition}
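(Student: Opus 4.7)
The plan is to prove this classical gambler's ruin identity by the standard martingale / harmonic function argument. For each $k \in \{0,1,\dots,n\}$, let $p_k$ denote the probability that the simple random walk on $\{0,1,\dots,n\}$ started at $k$ reaches $n$ before $0$. Clearly $p_0 = 0$ and $p_n = 1$, and the statement we want is $p_1 = 1/n$.

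First, I would condition on the first step to obtain the recurrence
\[
p_k = \tfrac{1}{2} p_{k-1} + \tfrac{1}{2} p_{k+1}, \qquad 1 \le k \le n-1,
\]
which rearranges to $p_{k+1} - p_k = p_k - p_{k-1}$. Hence the sequence $(p_k)_{k=0}^n$ is an arithmetic progression, and using the boundary values $p_0 = 0$ and $p_n = 1$ it must be $p_k = k/n$. Setting $k=1$ yields $p_1 = 1/n$, as claimed.

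Alternatively, and even more briefly, one can observe that the position process $(X_t)$ is a bounded martingale, and the hitting time $\tau$ of $\{0,n\}$ is almost surely finite (since the chain on a finite state space is recurrent). By the optional stopping theorem,
\[
1 = X_0 = \E[X_\tau] = n \cdot p_1 + 0 \cdot (1 - p_1),
\]
so $p_1 = 1/n$. There is no serious obstacle here; the only point to mention is the almost-sure finiteness of $\tau$, which follows from a routine geometric tail bound (any specific string of $n$ consecutive right-steps, which has positive probability, forces exit from $\{0,\dots,n\}$).
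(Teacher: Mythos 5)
Your proof is correct. The paper states this proposition with a \qed and no argument, treating it as a well-known gambler's ruin fact, so there is no in-paper proof to compare against. Both of your routes --- the harmonic recurrence $p_k = \tfrac12 p_{k-1} + \tfrac12 p_{k+1}$ forcing $(p_k)$ to be arithmetic with $p_0=0$, $p_n=1$, and the optional-stopping argument applied to the bounded martingale $X_t$ stopped at the a.s.\ finite hitting time of $\{0,n\}$ --- are standard and fully rigorous; either would serve as a complete proof if one were required.
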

Next, we shall also make use of the following crude bound.
\begin{proposition}\label{p:exp}
	The expected number of times the simple random walk on $\Z$ started at $0$ visits $0$ in the first $N$ steps is at most $10\sqrt{N}$.\qed
\end{proposition}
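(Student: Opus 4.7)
The plan is to apply linearity of expectation and then invoke a standard bound on the central binomial coefficient. Writing $(S_k)_{k \geq 0}$ for the simple random walk on $\Z$ started at $0$, the expected number of visits to $0$ among the first $N$ steps equals
\[
\E\!\sum_{k=0}^{N}\I{S_k=0}=\sum_{k=0}^N \P(S_k=0).
\]
Since the walk has period $2$, only even $k$ contribute, and $\P(S_{2n}=0)=\binom{2n}{n}2^{-2n}$, counting the $\binom{2n}{n}$ paths of length $2n$ from $0$ to $0$.

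Next I would invoke a standard estimate on the central binomial probability. Stirling's formula gives $\binom{2n}{n}2^{-2n}\leq 1/\sqrt{\pi n}$ for $n\geq 1$; if one prefers an entirely elementary route, the identity $\sum_{k=0}^{2n}\binom{2n}{k}^{2}=\binom{4n}{2n}\leq 4^{2n}$ yields $\binom{2n}{n}\leq 4^{n}/\sqrt{2n+1}$, which is more than enough.

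Summing and comparing with an integral,
\[
\sum_{k=0}^N \P(S_k=0)\leq 1+\sum_{n=1}^{\lfloor N/2\rfloor}\frac{1}{\sqrt{\pi n}}\leq 1+\frac{2}{\sqrt{\pi}}\sqrt{N/2},
\]
which is comfortably below $10\sqrt{N}$ for every $N\geq 1$ (indeed the true constant one obtains this way is less than $2$). There is no real obstacle here: the argument is essentially a one-line Stirling estimate, and the constant $10$ in the statement is deliberately loose, presumably so that the proposition can be cited without having to keep track of sharp constants later on.
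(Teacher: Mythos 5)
The paper states Proposition~\ref{p:exp} with a \textquotedblleft$\square$\textquotedblright\ directly after the statement, i.e.\ it is treated as a standard fact and no proof is given, so there is nothing internal to compare against. Your proof is the standard one and is correct: linearity of expectation reduces the count to $\sum_{n\le N/2}\binom{2n}{n}4^{-n}$, the Stirling bound $\binom{2n}{n}4^{-n}\le 1/\sqrt{\pi n}$ controls each term, and an integral comparison gives $O(\sqrt N)$ with a constant far below $10$. One small quibble: the \textquotedblleft entirely elementary\textquotedblright\ alternative you sketch does not actually follow as stated --- from $\binom{2n}{n}^2\le\sum_k\binom{2n}{k}^2=\binom{4n}{2n}\le 4^{2n}$ one only gets the trivial $\binom{2n}{n}\le 4^n$, not the claimed $4^n/\sqrt{2n+1}$; the genuine elementary route is the telescoping comparison of $\prod_{k\le n}\tfrac{2k-1}{2k}$ with $\prod_{k\le n}\tfrac{2k}{2k+1}$. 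Since you offer that only as an optional aside and your main Stirling argument is complete, this does not affect the validity of the proof.
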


Armed with these two facts, we are now ready to prove our main result.
\begin{figure}
	\begin{center}
		\trimbox{0cm 0cm 0cm 0cm}{ 
			\begin{tikzpicture}[xscale = 1.2, yscale = 0.8]
			
			\draw[very thick] (1, 0) -- (1, 7) [dashed];
			\draw[very thick] (8, 0) -- (8, 7) [dashed];
			\draw[very thick] (1, 1) -- (1, 6);
			\draw[very thick] (8, 1) -- (8, 6);
			
			\draw[] (2, 1) -- (2, 6)[red];
			\draw[] (3, 1) -- (3, 6)[red];
			\draw[] (4, 1) -- (4, 6)[red];
			\draw[] (5, 1) -- (5, 6)[red];
			\draw[] (6, 1) -- (6, 6)[red];
			\draw[] (7, 1) -- (7, 6)[red];
			
			\draw[very thick] (1, 1) -- (4, 1);
			\draw[very thick] (1, 2) -- (5, 2);
			\draw[very thick] (1, 3) -- (3, 3);
			\draw[very thick] (1, 4) -- (8, 4);
			\draw[very thick] (1, 5) -- (2, 5);
			\draw[very thick] (1, 6) -- (7, 6);

			\draw[] (8, 1) -- (4, 1)[red];
			\draw[] (8, 2) -- (5, 2)[red];
			\draw[] (8, 3) -- (3, 3)[red];
			\draw[] (8, 5) -- (2, 5)[red];
			\draw[] (8, 6) -- (7, 6)[red];
			
			\foreach \x in {1,2,3,4,5,6,7,8}
			\foreach \y in {1,2,3,4,5,6}
			\node (\x\y) at (\x, \y) [inner sep=0.5mm, circle, fill=black!100] {};
			\end{tikzpicture}
		}
	\end{center}
	\caption{A slice of $H$ between two consecutive vertical lines in $H$; edges coloured black are present while edges coloured red are absent.}\label{pic:sketch}
\end{figure}
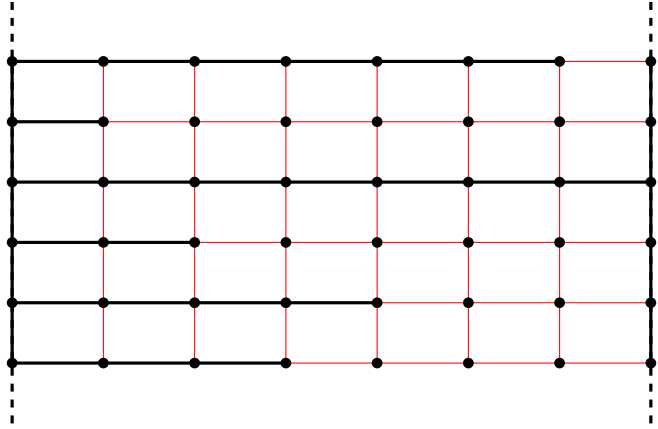

\begin{proof}[Proof of Theorem~\ref{t:main}]
	To prove the theorem, we will construct a (random) graph $H$ based on the random walk $\WW$ where the induced walk $\WW_H$ exhibits a strong drift away from the origin.
	
	The graph $H$ we construct will consist of the vertical lines $L_n = \{(x,y)\in\Z^2 : x=2^n-1\}$ for all integers $n\ge 0$, and a (random) collection of finite horizontal segments between any consecutive pair of vertical lines with the property that exactly one such horizontal segment connects any consecutive pair of vertical lines; here, by lines and segments, we mean the edges in the appropriate paths in $\Z^2$, as shown in Figure~\ref{pic:sketch}.
	
	Let us denote the location of the induced walk $\WW_H$ at a given time $t\ge0$ by $\WW_H(t) = (X_H(t), Y_H(t))$, with the time $t$ tracking steps along $\WW_H$ (as opposed to $\WW$). As before, we shall read off the letters of $\WW$ one at a time, and we shall reveal $H$ by following the induced walk $\WW_H$ and revealing more of $H$ as necessary. 
	
	Notice that the only vertical edges in $H$ are (deterministically) those on a vertical line $L_n$ for some $n \ge 0$, so at any time $t\ge0$, the induced walk $\WW_H$ accepts a vertical step of $\WW$ if and only if $\WW_H(t) \in L_n$ for some $n\ge0$. 
	
	We reveal the horizontal edges of $H$ in stages: during stage $n \ge 0$, we shall reveal all the horizontal edges of $H$ in between the lines $L_n$ and $L_{n+1}$, with the stage ending as soon as there is a horizontal path connecting these vertical lines in $H$. Note in particular that during stage $n$, we have already revealed all the horizontal edges in $H$ between $L_0$ and $L_{n}$, and none of the horizontal edges in $H$ to the right of $L_{n+1}$. 
	
	We begin by declaring every horizontal edge to the left of $L_0$ as being absent in $H$, and for $n \ge 0$, having completed stage $n-1$, we reveal $H$ in the following fashion. At some $t \ge 0$ during stage $n$, there are two possibilities. If $X_H(t) < 2^{n}-1$, then we have nothing to do when we read off the next letter of $\WW$ since all the edges of $H$ to the left of $L_n$ have already been revealed. If $X_H(t) \ge 2^{n} - 1$ on the other hand, we reveal $H$ in such a way so as to ensure that $\WW_H$ always accepts a letter of $\WW$ that would cause the induced walk to travel to the right. The stage ends as soon as we have a single horizontal path connecting $L_n$ and $L_{n+1}$, or in other words, at the first time $t \ge 0$ when we have $X_H(t) = 2^{n+1} - 1$. At the end of the stage, all horizontal edges between $L_n$ and $L_{n+1}$ whose presence or absence in $H$ have not been revealed over the course of the stage, we declare as being absent in $H$. In particular, the line $L_{n+1}$ is incident to precisely one horizontal edge to its left in $H$ and it has been revealed by the end of the stage.
	
	The above construction clearly ensures that $H$ has the structure we promised. More is true, however; as we shall shortly see, our construction endows the induced walk $\WW_H$ with a strong drift to the right. 
	
	For $n\ge0$, let $\tau_n$ be the first time $t$ at which we have $X_H(t) = 2^n-1$, and let $E_n$ denote the event that the walk $(\WW_H(t))_{t \ge \tau_n}$ hits the line $L_{n-1}$ before hitting the line $L_{n+1}$; in other words, $E_n$ is the event that there exists a time $t \in [\tau_n, \tau_{n+1})$ at which $X_H(t) = 2^{n-1} - 1$. With these definitions in place, we have the following claim.
	
	\begin{claim}\label{claim:p}
		There exists an absolute constant $c \in (0,1)$ such that $\P(E_n) \le c^n$ for all $n \ge 0$.
	\end{claim}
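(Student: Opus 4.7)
My strategy is to analyse the Markov chain induced on the height of $\WW_H$ while on $L_n$ during stage $n$, reducing the computation of $\P(E_n)$ to solving a one-variable recurrence. A crucial preliminary observation is that at time $\tau_n$, the walk arrives at $L_n$ via a rightward step across a horizontal edge in the strip $[L_{n-1}, L_n]$, and the only horizontal segment in that strip reaching $L_n$ is the spanning segment revealed during stage $n-1$: every other segment in the strip is a stub emanating from $L_{n-1}$ that fails to reach $L_n$. Hence $\WW_H$ is at $(2^n - 1, y^*)$ at time $\tau_n$, where $y^*$ denotes the height of that spanning segment.

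During stage $n$, whenever $\WW_H$ is on $L_n$ at height $y$, each of the four directions is attempted by $\WW$ with probability $1/4$. Up and down keep $\WW_H$ on $L_n$; a right step always succeeds, launching a ``right excursion'' into the finger at height $y$; a left step succeeds if and only if $y = y^*$, in which case it launches a ``left excursion'' along the spanning segment. Every right excursion is equivalent to a simple random walk on $\Z_{\ge 0}$ starting at $1$ and absorbed at $0$ (returning to $L_n$) or at $2^n$ (reaching $L_{n+1}$), regardless of the finger's current length, because whenever the walk attempts a step into a not-yet-revealed right edge that edge is revealed as present. Proposition~\ref{p:int} therefore gives success probability $1/2^n$ per right excursion. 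A left excursion from $y^*$ is a simple random walk on the deterministically fixed spanning segment of length $2^{n-1}$, starting one step inside from $L_n$; by Proposition~\ref{p:int}, it reaches $L_{n-1}$ (realising $E_n$) with probability $1/2^{n-1}$, and otherwise returns to $L_n$ at $y^*$.

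Setting $u(y) = \P(E_n \mid \WW_H \text{ is at } (2^n-1, y))$, the Markov property yields a second-order linear recurrence; for $y \ne y^*$ it reads $(2 + 2^{-n})\,u(y) = u(y+1) + u(y-1)$, whose characteristic roots $r_\pm$ satisfy $r_+ r_- = 1$ and $r_+ - r_- = \sqrt{(2+2^{-n})^2 - 4} \ge 2 \cdot 2^{-n/2}$. Requiring $u$ to be bounded and symmetric about $y^*$ forces $u(y) = u(y^*)\, r_-^{|y-y^*|}$. Substituting into the inhomogeneous equation at $y = y^*$, where the source term $2^{-(n-1)}/4$ comes from the failure probability per left excursion, gives
\[
u(y^*) \;=\; \frac{2^{-(n-1)}}{(r_+ - r_-) + 2^{-(n-1)}} \;\le\; \frac{2^{-(n-1)}}{2 \cdot 2^{-n/2}} \;=\; 2^{-n/2},
\]
so $\P(E_n) \le (1/\sqrt{2})^n$, establishing the claim with $c = 1/\sqrt{2}$. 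The main subtlety—and, in my view, the chief technical obstacle—is rigorously justifying the Markov property: one must verify that the outcome probabilities of each excursion depend only on the current height on $L_n$ and not on the prior history of edge revelations in the strip $[L_n, L_{n+1}]$. This reduces to identifying the walk on the adaptively growing finger with a simple random walk on $\Z_{\ge 0}$, whose absorption probabilities are furnished uniformly by Proposition~\ref{p:int}.
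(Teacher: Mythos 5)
Your proof is correct, and it takes a genuinely different route from the paper. The paper decomposes the time between $\tau_n$ and $\tau_{n+1}$ into excursions at a fixed $y$-coordinate, observes that a negatively successful excursion can only occur at the one height $\alpha$ carrying the spanning segment to $L_{n-1}$, couples the walk with a ``teleporting'' process so that excursions are defined past the first success, and then applies Propositions~\ref{p:int} and~\ref{p:exp} together with a union bound over $3^n$ excursions to show that a positively successful excursion very likely precedes a negatively successful one. You instead treat the height on $L_n$ as the state variable, reduce $\P(E_n)$ to the hitting probability $u(y^*)$ for an explicit second-order linear recurrence with a source at $y^*$, and solve it in closed form via the characteristic roots, arriving at the clean bound $\P(E_n) \le 2^{-n/2}$ and thus $c = 1/\sqrt{2}$. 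Your key computational inputs are the same as the paper's (chiefly Proposition~\ref{p:int}), but the decomposition is different: where the paper counts excursions and controls their number via a local-time estimate, you eliminate time entirely and solve the potential equation. Your approach is sharper --- it yields an explicit, near-optimal constant, whereas the paper's estimates are deliberately crude --- but it is less robust: the paper's excursion-counting argument transfers essentially verbatim to the multi-walk setting of Theorem~\ref{t:countable}, while a potential-theoretic argument would need to be reworked there. Your worry about the Markov property is well placed and is the one point that would need to be fleshed out in a full write-up; the key observations making it go through are (i) a rightward excursion from $(2^n-1,y)$ is a simple random walk on $\{0,\dots,2^n\}$ regardless of how long the finger at height $y$ already is, because the construction always accepts rightward steps in the strip, and (ii) the spanning segment to the left of $L_n$ is deterministic by the time stage $n$ begins. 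Given these, the well-definedness of $u(y)$ (independent of the revealed history of fingers) and the a.s.\ absorption at $L_{n-1}$ or $L_{n+1}$ follow, so the first-step analysis and the boundedness argument that select the $r_-$-branch are legitimate.
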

	\begin{proof}
		Let $\alpha$ be the $y$-coordinate of the unique horizontal path joining the vertical lines $L_{n-1}$ and $L_n$, and let $T$ be the first time after $\tau_n$ at which the walk $\WW_H$ hits either the line $L_{n-1}$ or the line $L_{n+1}$. Note that the time between $\tau_n$ and $T$ naturally decomposes into \emph{excursions}, where an excursion is a maximal interval of time during which the walk $\WW_H$ remains at some fixed $y$-coordinate.
		
		Let us now describe the walk $\WW_H$ in terms of its excursions. First, note that our construction of $H$ ensures that the $y$-coordinates of successive excursions of $\WW_H$ are determined by a simple random walk on $\Z$ started at $\alpha$. Also, we can describe an excursion at some $y$-coordinate $\beta$ as follows. If $\beta \ne \alpha$, then during an excursion at $\beta$, successive $x$-coordinates of $\WW_H$ are determined by a simple random walk on the interval $\{2^n-1,2^n, \dots, 2^{n+1}-1\}$ started at $2^n-1$, with the excursion ending either, with probability 1, when the $x$-coordinate of the walk is $2^{n+1}-1$, or, with probability $2/3$, when the $x$-coordinate of the walk is $2^{n}-1$. If $\beta = \alpha$ on the other hand, then successive $x$-coordinates of $\WW_H$ are determined by a simple random walk on the interval $\{2^{n-1}-1,2^{n-1}, \dots, 2^{n+1}-1\}$ started at $2^n-1$, with the excursion ending either, with probability 1, when the $x$-coordinate of the walk is either $2^{n-1}-1$ or $2^{n+1}-1$, or, with probability $1/2$, when the $x$-coordinate of the walk is $2^{n}-1$. Crucially, note that by the strong Markov property of the walk $\WW_H$, each excursion depends on past excursions only through the endpoint of the last excursion preceding it.
		
		Let us say that an excursion is \emph{positively successful} if it ends on account of the walk $\WW_H$ reaching the line $L_{n+1}$, and \emph{negatively successful} if it ends on account of the walk $\WW_H$ reaching the line $L_{n-1}$. In this language, we see that $E_n$ is precisely the event that we witness a negatively successful excursion before a positively successful one.
		
		We would like to show that in the first $3^n$ excursions, there is a positively successful excursion, but no negatively successful one. A minor technicality arises from the fact that excursions are only defined until the first successful one occurs. To circumvent this issue, consider a modified process coupled with $\WW_H$ which, after a successful excursion with $y$-coordinate $\beta$, teleports to the line $L_n$, taking $y$-coordinate $\beta+1$ with probability $1/2$, and $\beta-1$ otherwise, before again moving according to $\WW_H$. Let $F_1$ to be the event that at least one of the first $3^n$ excursions in this modified process is positively successful, and let $F_2$ be the event that none of the first $3^n$ excursions in the modified process are negatively successful. Since $\WW_H$ and our modified process behave identically until the first successful excursion, if $F_1$ and $F_2$ both occur, then $E_n$ must occur. Therefore, it suffices to show that both $F_1$ and $F_2$ are overwhelmingly likely.
		
		First, we deal with the event $F_1$. It is easy to see from Proposition~\ref{p:int} that an excursion is positively successful with probability at least $(1/100)2^{-n}$. Using the strong Markov property, we conclude that  
		\[\P(F_1^c) \le \left(1-2^{-n}/100\right)^{3^n} \le c_1^n, \]
		where $c_1 \in (0,1)$ is an absolute constant.
		
		Next, we handle the event $F_2$. Notice that we may only witness a negatively successful excursion at $y$-coordinate $\alpha$; with this in mind, let $Z$ be the number of excursions in the first $3^n$ excursions at $y$-coordinate $\alpha$. Since the $y$-coordinates of successive excursions are determined by a simple random walk on $\Z$ started at $\alpha$, we conclude from Proposition~\ref{p:exp} that 
		\[\P\left(Z \ge (7/4)^n\right) \le \frac{10 \sqrt{ 3^{n}}}{ (7/4)^n} \le c_2^n,\]
		where $c_2 \in (0,1)$ is an absolute constant. As before, we know from Proposition~\ref{p:int} that an excursion at $y$-coordinate $\alpha$ is negatively successful with probability at most $100 \cdot 2^{-n}$, so we may again use the strong Markov property to conclude that
		\begin{align*}
		\P(F_2 ^c) &\le \P\left(Z \ge (7/4)^n\right) + \P\left(F_2^c \Cond Z < (7/4)^n\right)\\
		&\le c_2^n + \left(1 - (1-100 \cdot 2^{-n})^{(7/4)^n}\right)\\
		&\le c_2^n + c_3^n,
		\end{align*}
		where $c_3 \in (0,1)$ is an absolute constant.
		
		The result follows from the estimates above since $\P(E_n) \le \P(F_1 ^c) + \P(F_2^c)$.
	\end{proof}
	It follows from the above claim, by the Borel--Cantelli lemma, that the walk $\WW_H$ almost surely visits the line $L_n$ only finitely many times for each $n \ge 0$, thus proving the result.
\end{proof}

The proof of Theorem~\ref{t:countable} follows from a simple modification of the proof of Theorem~\ref{t:main}; therefore, we only provide a sketch highlighting the main differences. 

\begin{proof}[Proof of Theorem~\ref{t:countable}]
	As in the argument above, we begin with the vertical lines $L_1,L_2,\dots$, where $L_n$ has $x$-coordinate $2^n - 1$. Suppose that we have defined the exceptional graph $H$ up to $L_{i}$ by using the walks $\WW_1, \dots, \WW_i$. To define the portion of $H$ between $L_i$ and $L_{i+1}$, we run $\WW_{i+1}$ on the already defined portion of $H$ until it first hits $L_i$. At that point, we run the algorithm used in the proof above simultaneously on the $i+1$ walks $\WW_1, \dots, \WW_{i+1}$ (so that they can move freely to the right of $L_i$ but not up or down in that region), stopping each walk when it first hits $L_{i+1}$.  We continue this process, introducing one new walk at each step. The analysis goes through essentially as before: if $E_{n,i}$ is the event that, after hitting $L_n$, the walk $\WW_i$ hits $L_{n-1}$ before $L_{n+1}$, then as in Claim~\ref{claim:p}, we have $\P(E_{n,i})\le nc^n$ for all $n\ge i$ for some absolute constant $c\in (0,1)$. The result again follows from the Borel--Cantelli lemma.
\end{proof}

\section{Non-existence of exceptional subgraphs} \label{s:branching}
We now consider branching random walks on $\Z^d$ in all dimensions $d \ge 2$. Recall that we start with a single particle at the origin, and that at each time step, every particle independently generates a number of additional particles at its current location according to some fixed nontrivial distribution, where at least one new particle is generated with positive probability; independently of the other particles and their history, all particles then take a step in a direction chosen uniformly at random. The result is a random family $(\WW_i)_{i\in S}$ of dependent simple random walks, where the branches $\WW_i$ with $i \in S$ each correspond to a walk on $\Z^d$ obtained by starting with the original particle at the origin and then following either the particle presently under consideration or one of its children at each time.

We will use the following Chernoff-type bound; see~\citep{Chernoffcite} for a proof.
\begin{proposition}\label{chernn}
For $n \in \mathbb{N}$, $p \in (0,1)$ and $\eps>0$, we have
\[
\P(\Bin(n,p) \le np(1-\eps)) \le \exp\left(-\frac{\eps^2 np}{2}\right). \eqno\qed
\]
\end{proposition}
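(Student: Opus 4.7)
The plan is to prove this via the standard Chernoff (exponential Markov) method applied to the lower tail of the binomial. Write $X=\Bin(n,p)=\sum_{i=1}^n X_i$, where the $X_i$ are i.i.d.\ Bernoulli$(p)$ random variables. For any parameter $t>0$, Markov's inequality applied to the nonnegative random variable $e^{-tX}$ gives
\[
\P\left(X\le np(1-\eps)\right)
\;=\;\P\left(e^{-tX}\ge e^{-tnp(1-\eps)}\right)
\;\le\; e^{tnp(1-\eps)}\,\E\!\left[e^{-tX}\right].
\]
By independence of the $X_i$, we have $\E[e^{-tX}]=(pe^{-t}+(1-p))^n = (1-p(1-e^{-t}))^n$, and the elementary inequality $1-u\le e^{-u}$ applied with $u=p(1-e^{-t})$ yields $\E[e^{-tX}]\le \exp(-np(1-e^{-t}))$.

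Next I would optimise in $t$. Plugging the bound above into the Markov estimate gives
\[
\P\left(X\le np(1-\eps)\right)\le \exp\!\left(np\bigl[t(1-\eps)-(1-e^{-t})\bigr]\right),
\]
and differentiating the bracket in $t$ shows that the minimum is attained at $t=-\ln(1-\eps)$ (which is positive for $\eps\in(0,1)$, as required). Substituting $e^{-t}=1-\eps$ reduces the right-hand side to
\[
\exp\!\left(np\bigl[-(1-\eps)\ln(1-\eps)-\eps\bigr]\right).
\]

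Finally, it remains to verify the purely scalar inequality $-(1-\eps)\ln(1-\eps)-\eps\le -\eps^2/2$ for $\eps\in[0,1)$, which gives the claimed bound $\exp(-\eps^2 np/2)$. Setting $f(\eps):=(1-\eps)\ln(1-\eps)+\eps-\eps^2/2$, one checks that $f(0)=0$, $f'(0)=0$, and $f''(\eps)=\eps/(1-\eps)\ge 0$ on $[0,1)$, so $f\ge 0$ on $[0,1)$, which is exactly the inequality needed. This last step — a one-variable convexity check — is the only nontrivial piece, and it is completely routine; the Chernoff/Markov part and the optimisation in $t$ are standard. (Of course, since this is the textbook multiplicative lower tail Chernoff bound, one could equally well simply invoke the reference cited in the statement.)
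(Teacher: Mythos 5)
Your proof is correct, and the standard Chernoff/exponential-moment argument you give is exactly the textbook derivation; the paper itself does not prove this proposition but simply cites a reference, so there is no alternative argument in the paper to compare against. One small point worth noting: the statement allows any $\eps>0$, but your optimisation (and the scalar inequality at the end) is carried out for $\eps\in(0,1)$. This is harmless — for $\eps\ge 1$ the event $\{\Bin(n,p)\le np(1-\eps)\}$ has probability $(1-p)^n\le e^{-np}\le e^{-\eps^2 np/2}$ when $\eps=1$, and probability zero when $\eps>1$ — but a complete writeup should dispose of that range explicitly rather than leaving it implicit.
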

We are now ready to give the proof of our final result.
\begin{proof}[Proof of Theorem~\ref{t:branching}]
Note that removing particles reduces the set of walks and makes the problem of finding a recurrent walk harder. Hence, if at some point a particle has more than one child, we may discard all but one of its children. We may therefore assume that, for some $\eps > 0$, each particle either has one child with probability $\eps$, or no children at all with probability $1 - \eps$. It will be helpful to have some notation: in what follows, we write $B_G(x,\rho)$ to denote the set of vertices at graph distance at most $\rho$ from $x$ in a graph $G$, write $B(x,\rho)$ for $B_{\Z^d}(x,\rho)$, and abbreviate $B(0,\rho)$ by $B(\rho)$.
	
In this proof, we will need to show that, with very high probability, certain particles have exponentially many children at a given future time and ensure that most of these children do not wander very far from the origin. To this end, we need two results which we state and prove below.

First, we need the following estimate for the probability of a simple random walk on $\Z^d$ getting unexpectedly far from its starting point after taking some finite number of steps.	
\begin{claim}\label{cl:bound}
Fix $\delta > 0$ and let $S_n$ be an $n$-step simple random walk on some subgraph $G \subset \Z^d$ starting from the origin. Then we have
\[ \P \left( S_n \not\in B_G \left( 0 , \delta n \right)\right) \le c_d n^d \exp \left( - \frac{\delta^2 n}{2} \right),\]
where $c_d > 0$ is a constant depending only on the dimension $d$.
\end{claim}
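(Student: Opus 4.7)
The plan is a union bound over potential endpoints of the walk, combined with a coordinate-wise Hoeffding concentration estimate.

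Since each step of $S$ alters at most one coordinate by at most a unit, after $n$ steps the walk $S_n$ must lie in the $\ell^\infty$-ball $B_{\Z^d}(0,n)$, which contains at most $(2n+1)^d = O(n^d)$ lattice points. A union bound therefore gives
\[
\P\bigl(S_n \notin B_G(0,\delta n)\bigr) \;\le\; \sum_{\substack{v \in \Z^d \\ d_G(0,v) > \delta n,\ \|v\|_\infty \le n}} \P(S_n = v),
\]
a sum of $O(n^d)$ summands. Thus it suffices to show $\P(S_n = v) \le 2\exp(-\delta^2 n/2)$ for each such $v$, with the prefactor $c_d n^d$ absorbing the number of terms.

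To bound $\P(S_n=v)$, I work coordinate-by-coordinate. Each coordinate process $(S_t)_i$ is a sum of $n$ independent increments taking values in $\{-1,0,+1\}$, and so Hoeffding's inequality (which is exactly the content of Proposition~\ref{chernn} after a suitable recasting) delivers
\[
\P\bigl(|(S_n)_i| > t\bigr) \;\le\; 2\exp\!\left(-\frac{t^2}{2n}\right).
\]
The hypothesis $d_G(0,v) > \delta n$ means the walk's trajectory, which is a path in $G$ from $0$ to $v$ of length equal to the number of non-lazy steps $A_n$, must satisfy $A_n \ge d_G(0,v) > \delta n$. I then split into two cases: either (i) some coordinate satisfies $|v_i| \gtrsim \delta n/d$, in which case coordinate-wise Hoeffding at $t = |v_i|$ gives the required exponential tail directly; or (ii) all coordinates of $v$ are small, in which case the walk has taken many non-lazy steps yet returned close to the origin in $\Z^d$, and a Chernoff bound on the Binomial$(n,1/(2d))$ direction-attempt counts rules this out.

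The main obstacle is case (ii): the chain of inequalities $d_G(0,v) \ge \|v\|_1 \ge \|v\|_\infty$ runs the wrong way, so a large $G$-distance does not by itself force a large coordinate. The resolution is that such a $v$ forces the walk to traverse a long path in $G$ while cancelling out in $\Z^d$, which constrains the direction-attempt counts in an exponentially unlikely way that is captured by Proposition~\ref{chernn}. Combining the two cases with the union bound in the first paragraph yields the claimed estimate $c_d n^d \exp(-\delta^2 n/2)$.
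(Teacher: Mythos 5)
You correctly begin with the union bound over the $O(n^d)$ candidate endpoints, and you correctly identify the crux of the matter: graph distance in $G$ dominates $\|\cdot\|_\infty$, not the other way around, so a vertex $v$ with $d_G(0,v) > \delta n$ may well have every coordinate small. But your proposed resolution of ``case (ii)'' is not just vague --- it is false. Take $G$ to be a long spiral path inside $[-n,n]^d$ through the origin: a vertex $v$ at $\ell^\infty$-distance $2$ from the origin can easily satisfy $d_G(0,v) > \delta n$. For the walk to sit at $v$ at time $n$ it simply traverses part of the spiral; this imposes no unusual constraint on the direction-attempt counts, which remain $\Bin(n,1/(2d))$ and perfectly typical regardless of where the walk ends up, because the attempted directions are chosen independently of $G$. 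So no Chernoff bound on those counts can rule out the event. There is a second, quieter gap as well: for a general subgraph $G$, the coordinate process $(S_t)_i$ is \emph{not} a sum of independent (or even centered) increments --- the conditional law of the increment depends on which incident edges are present at the current vertex, and the conditional mean need not vanish (think of the endpoint of a path) --- so the Hoeffding bound you invoke in case (i) is not available as stated either.

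The paper's proof takes a genuinely different and much shorter route, and this is the missing ingredient. It invokes the Varopoulos--Carne bound in Peyre's form: for simple random walk on a graph $G$,
\[ p_n(x,y) \le 2\sqrt{\deg(y)/\deg(x)}\,\exp\left(-\frac{\rho(x,y)^2}{2n}\right), \]
where $\rho$ denotes graph distance \emph{in $G$}. This gives Gaussian-type decay directly in the quantity that controls membership in $B_G(0,\delta n)$, with no detour through Euclidean coordinates at all; summing over the $(2n+1)^d$ lattice points in $B(0,n)$ finishes the claim. Because of the distance mismatch you correctly spotted, an elementary coordinate-wise concentration argument does not appear to substitute for Varopoulos--Carne here --- some input controlling the walk in terms of the intrinsic metric of $G$ is essential.
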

\begin{proof}
We make use of an old bound on the transition probabilities of a Markov chain due to Varopoulos~\citep{V} and Carne~\citep{C}, although we use it in a more recent form due to Peyre~\citep{VC}. To state this bound, we need a little set up. Given a graph $G$ and a pair of vertices of $x$ and $y$ of $G$ in the same connected component, let $p_t(x,y)$ denote the probability that a simple random walk on $G$ starting at $x$ is at $y$ after $t$ steps, and write $\rho(x,y)$ for the graph distance between $x$ and $y$ in $G$; in this language, we have
\[p_t(x,y) \le 2 \sqrt{\frac{  \deg(y) }{\deg(x)}} \exp \left( - \frac{ \rho(x,y) ^2}{2t} \right).\]

With the above bound in hand, we conclude that the probability that an $n$-step simple random walk on some subgraph $G \subset \Z^d$ starting at the origin ends up outside the ball $B_G(0, \delta n)$ is at most
\[
\sum_{y \in B(n) \setminus B_G(0, \delta n) } p_n(0, y) \le \sum_{y \in B(n)} 2 \sqrt{2d} \exp \left( - \frac{\delta^2 n^2}{2n} \right) \le \sqrt{8d} (2n + 1)^d \exp \left( - \frac{\delta^2 n}{2} \right);
\]
it is clear that the bound above is of the required form, proving the claim.
\end{proof}
	
Second, we need the following estimate for the rate of growth of a Galton--Watson branching process.
\begin{claim}\label{cl:descendants}
Let $(N_j)_{j \ge 0}$ be the number of descendants at time $j$ of a Galton--Watson branching process started with a single particle and with an offspring distribution that takes the value $2$ with probability $\eps$ and $1$ with probability $1 - \eps$. Then there exists constants $c, c' >0$ such that
\[\P \left( N_j \ge (1+c)^{j} \right) \ge 1 - e^{-c' j}\]
for all $j\ge 1$.
\end{claim}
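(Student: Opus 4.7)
My plan is to combine a breadth-first exploration identity with an iterated Chernoff bootstrap. The idea is that the BFS identity lets us get a polynomial-in-$j$ ``seed'' population after about $j/2$ generations, and Chernoff concentration then drives the subsequent exponential growth with exponentially small failure probability.

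Concretely, enumerate the particles of the branching tree in breadth-first order as $v_1,v_2,\dots$ and let $X_i\in\{0,1\}$ indicate whether $v_i$ has two children. Since particles independently have two children with probability $\eps$, the $X_i$ are i.i.d.\ $\mathrm{Bernoulli}(\eps)$, and a short induction gives the identity
\[
N_{k+1}=1+\sum_{i=1}^{n_k}X_i,\qquad\text{where } n_k:=N_0+\cdots+N_k\ge k+1.
\]
Since partial sums of non-negative terms are monotone in the upper limit and $n_k\ge k+1$ is deterministic, this yields the coupling bound $\P(N_{k+1}\le m)\le\P(\mathrm{Bin}(k+1,\eps)\le m-1)$. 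Applying Proposition~\ref{chernn} with $T=\lfloor j/2\rfloor$ and $m-1\le T\eps/2$ then gives $\P(N_{\lfloor j/2\rfloor}<j\eps/4)\le e^{-j\eps/24}$.

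For the second phase, condition on $N_{\lfloor j/2\rfloor}\ge K:=j\eps/4$. Since $N_{k+1}-N_k\mid N_k\sim\mathrm{Bin}(N_k,\eps)$, Proposition~\ref{chernn} with $\delta=1/2$ bounds $\P(N_{k+1}<(1+\eps/2)N_k\mid N_k)\le\exp(-N_k\eps/8)$. On the intersection of the good events up to step $k$ one has $N_k\ge K(1+\eps/2)^{k-\lfloor j/2\rfloor}$, so the failure at step $k$ is at most $\exp(-(K\eps/8)(1+\eps/2)^{k-\lfloor j/2\rfloor})$; summing the resulting geometric-in-the-exponent series gives a total failure probability of $O(e^{-K\eps/8})=O(e^{-j\eps^2/32})$. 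On the joint good event $N_j\ge K(1+\eps/2)^{\lceil j/2\rceil}\ge(1+c)^j$ for any $c$ with $(1+c)^2\le 1+\eps/2$ (e.g.\ $c=\eps/5$), provided $j\ge 4/\eps$. The residual range $j<4/\eps$ is trivial: there $(1+c)^j$ is bounded by an absolute constant, so the claim reduces to $\P(N_j\ge 2)=1-(1-\eps)^j\ge 1-e^{-\eps j}$, and we finish by taking $c'$ to be a small multiple of $\eps^2$.

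The main obstacle to a cleaner proof is that a naïve iterated Chernoff fails in the early generations: while $N_k$ is small (indeed, $N_k=1$ with probability $(1-\eps)^k$) the Chernoff estimate $\exp(-N_k\eps/8)$ is merely a small \emph{constant} rather than exponentially small in $j$. The breadth-first Bernoulli identity bypasses this by coupling the process with a bona fide i.i.d.\ Bernoulli sequence, giving an honest exponential-in-$j$ tail bound that lifts the process out of the low-population regime into the territory where iterated Chernoff takes over cleanly.
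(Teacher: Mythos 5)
Your proof is correct but takes a genuinely different route than the paper. The paper works directly with the generation-to-generation ratios $Y_j = N_j/N_{j-1}$: since $Y_j\in[1,2]$, and since Proposition~\ref{chernn} applied conditionally on $\{N_{j-1}=i\}$ gives $\P(Y_j\ge 1+\eps/2\mid N_{j-1}=i)\ge 1-e^{-i\eps/8}\ge 1-e^{-\eps/8}$ uniformly in $i\ge 1$, one can construct i.i.d.\ variables $X_j$ taking value $1+\eps/2$ with probability $p=1-e^{-\eps/8}$ and $1$ otherwise that are dominated by the $Y_j$; then $N_j=\prod_{i\le j}Y_i\ge\prod_{i\le j}X_i$, and a single Chernoff estimate on $\Bin(j,p)$ (counting how many $X_i$ exceed $1$) finishes the proof. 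This sidesteps the low-population obstacle you highlight at the end: the \emph{multiplicative} gain is achieved with probability bounded away from $0$ even when $N_{j-1}=1$, so no seeding phase is needed. Your breadth-first exploration identity $N_{k+1}=1+\sum_{i\le n_k}X_i$ (and the stochastic domination by $\Bin(k+1,\eps)$ since $n_k\ge k+1$) is correct and gives an honest linear seed, and the subsequent iterated Chernoff phase is sound, so the argument goes through; however it is visibly more work, and one step is stated a bit cavalierly: the claim that the phase-two sum $\sum_{\ell\ge 0}\exp\bigl(-(K\eps/8)(1+\eps/2)^\ell\bigr)$ is ``$O(e^{-K\eps/8})$'' hides a factor that, when $K\eps/8$ is small (i.e.\ for $j$ only modestly larger than $4/\eps$), is of order $1/(K\eps)$ rather than an absolute constant; this can be absorbed by shrinking $c'$, but deserves a line. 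The paper's ratio-product decomposition is the cleaner device here and is the one to remember.
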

	
\begin{proof}
Conditioned on $N_{j-1}$, the random variable $N_j$ is distributed as $\Bin ( N_{j-1}, \eps ) + N_{j-1}$, independent of everything else in the past. For $j\ge 1$, define $Y_{j} = N_{j}/N_{j-1}$ and note that $Y_j \in [1,2]$ and $\E[ Y_j ] = 1 + \eps$. From Proposition~\ref{chernn}, we deduce that
\begin{align*}
\P \left( Y_j \ge 1 + \eps/2 \right) &= \sum_{i=1}^\infty \P \left( \Bin \left( i, \eps \right) \ge i\eps/2  \Cond  N_{j-1} = i \right) \P \left(N_{j-1} = i \right)\\
		&\ge \sum_{i=1}^\infty \left( 1  - e^{- i\eps/8} \right)\P \left(N_{j-1} = i \right)\\
		&\ge \left( 1  - e^{- \eps/8} \right).
\end{align*}
Thus, setting $p = ( 1  - e^{- \eps/8})$, there exist independent random variables $(X_j)_{j\ge 1}$ dominated by the $Y_j$ such that $X_j$ takes the value $1+ \eps/2$ with probability $p$ and $1$ otherwise with probability $1-p$. Since $N_j=Y_jY_{j-1}\cdots Y_1\ge X_j X_{j-1}\cdots X_1$, by Proposition~\ref{chernn}, we have
\begin{align*}
		\P \left( N_j \ge (1 + \eps/2)^{jp/2} \right) &\ge \P \left( \bigl| \left\{i\in [j] : X_i >1 \right\}\bigr| \ge \frac{jp}{2} \right)\\
		&\ge \mathbb{P} \left( \Bin\left(j,p \right) \ge \frac{jp}{2} \right)\\
		&\ge 1 - e^{-\frac{jp}{8}},
		\end{align*}
as required.
\end{proof}
	
Before diving into the details of the proof, let us sketch our plan of attack. Our argument will proceed in \emph{stages} bookended	 by a rapidly-increasing sequence of times $\left(T_i \right)_{i \in \N}$.  At each time $T_i$, and for every possible finite subgraph $G \subset \Z^d$ contained in the box $[-T_i,T_i]^d$, we choose a representative particle $p_G$. We then show that, with very high probability, for each such representative particle $p_G$ and every possible extension of the corresponding graph $G$ onto $[-T_{i+1},T_{i+1}]^d$, some descendant of $p_G$ visits every vertex of $G$ with distance at most $i$ (in $G$) to the origin, and ends up exactly at or one step away from the origin at time $T_{i+1}$. We shall show that the failure probabilities decay rapidly enough so that we may finish by applying the Borel--Cantelli lemma. To avoid clutter, we will not worry about making sure all the appropriate values are integers.
	
Let $T_0 = 0$, and for the singleton graph containing just the origin, we choose the initial particle at the origin. Let $\delta$ be sufficiently small such that $2^{-4d\delta}(1+c)^{(1-4\delta)} > 1$ and $c \delta - c' (1-5 \delta) < 0$, where $c, c'$ are the absolute constants promised by Claim~\ref{cl:descendants}.
	
Suppose now that we have run the branching walk over the course of $\ell - 1$ stages until time $n = T_{\ell-1}$, and that we have a representative particle $p_G$ at some location in $[-n, n]^d$ for each of the at most $2^{d(2n+1)^d}$ possible finite subgraphs $G  \subset \Z^d$ contained in the box $[-n,n]^d$. Note that these particles need not be distinct, and that a single particle may be at different locations in different subgraphs. We now describe how we construct the $\ell$th stage of the branching walk.
	
Let $N>n/\delta$ be a suitably large integer. We specify what conditions $N$ needs to satisfy in what follows, and shall then take $T_\ell = N$. We shall further divide the $\ell$th stage, which consists of the interval of time $[n+1, N]$, into three smaller blocks of time as follows.

In the first block, we run the branching walk for $\delta N$ more steps (after time $n$) so that for each $G \subset \Z^d$ contained in the box $[-n,n]^d$, the representative particle $p_G$ has a set of descendants $P_G$ after these $\delta N$ steps. By Claim~\ref{cl:descendants}, any fixed representative particle $p_G$ has at least $(1+c)^{\delta N}$ descendants with probability at least $1 - e^{-c' \delta N}$. If even one of the representative particles has not branched this much, we declare that this step has \emph{failed}. Note that we are free to discard particles, so we may assume (as long as this step has not failed) that each representative particle $p_G$ has a set $P_G$ of exactly $(1+c)^{\delta N}$ descendants. Note that all the particles under consideration are at a graph distance of at most $n + \delta N<2\delta N$ from the origin in all the graphs under consideration. 
	
In the second block, we run the branching walk for another $N - 5 \delta N$ steps. For each $G \subset \Z^d$ contained in the box $[-n,n]^d$, we now count the number of descendants of each $q \in P_G$. We assume that every such $q\in P_G$ has at least $(1+c)^{N - 5 \delta N}$ descendants, and if this ever fails to hold, we again declare that the step has \emph{failed}. By Claim $\ref{cl:descendants}$, the probability of failure for any particular particle $q$ as above is at most $\exp \left( - c' (N - 5\delta N) \right)$.
	
Now, fix $H \subset \Z^d$ contained in the box $[-N, N]^d$, and suppose that it induces a graph $H'$ on $[-n, n]^d$. We say that a particle $q \in P_{H'}$ is \emph{$H$-good} if at least half of its descendants are no further than $3 \delta N$ from the origin in $H$ after the $n + N - 4 \delta N$ steps taken so far. Let us estimate the probability  $\lambda_H$ that any given particle $q \in P_{H'}$ is $H$-good. Fix $q\in P_{H'}$  and denote the position of $q$ at time $n + \delta N$ by $L_q$. By Claim $\ref{cl:bound}$, the probability that a given descendant of $q$ is within distance $\delta N$ of $L_q$ in $H$, and hence within distance $3 \delta N$ of the origin, is at least 
\[1 - c_d N^d \exp \left( - \frac{\delta^2 N}{2} \right).\] 
Let $Y$ be the proportion of descendants of $q$ which are within $3\delta N$ of the origin. By the linearity of expectation, 
\[\E \left[ Y \right] \ge 1 - c_d N^d \exp \left( - \frac{\delta^2 N}{2} \right).\]
If $q$ is not good, then $Y\le 1/2$, and since $Y\le 1$, we find that $\lambda_H + \tfrac 12 (1-\lambda_H) \ge \mathbb{E} \left[ Y \right]$. Hence, \[\lambda_H \ge 1 - 2 c_d N^d \exp \left( - \frac{\delta^2 N}{2} \right)\] and, by choosing $N$ sufficiently large, we may certainly assume that $\lambda_H \ge 2/3$. We now also declare the step to have \emph{failed} if, for some $H$ as above, at most $\frac{1}{3}(1+c)^{\delta N}$ elements of $P_{H'}$ are $H$-good, and deduce from Proposition~\ref{chernn} that the probability of failing in this fashion for any fixed $H$ is at most
	\begin{align*}
	\mathbb{P} \left( \bin \left( (1+c)^{\delta N}, \lambda \right) \le \tfrac{1}{3} \left(1+c\right)^{\delta N} \right) & \le \mathbb{P} \left( \bin \left( (1+c)^{\delta N}, \lambda \right) \le \tfrac{1}{2} \left(1+c\right)^{\delta N} \lambda \right) \\
	&\le \exp \left( - \frac{1}{8} (1+c)^{\delta N} \lambda \right)\\
	&\le \exp \left( - \frac{1}{16} (1+c)^{\delta N} \right).
	\end{align*}
Hence, if we have not already failed, then for any $H$ as above, counting the descendants of $H$-good particles that don't stray too far from the origin, we have at least $\tfrac{1}{6} \left( 1 + c \right)^{N - 4 \delta N}$ such descendants at graph distance at most $3 \delta N$ from the origin in $H$ after $n + N - 4 \delta N$ steps; we call these descendants $H$-\emph{counters}.
	
In the third and final block of the stage, our goal is to visit all vertices at distance at most $\ell - 1$ from the origin in every $H \subset \Z^d$ contained in the box $[-N, N]^d$. For any fixed $H$ as above, there are at most $(2 \ell - 1)^d$ such vertices close to the origin. We enumerate these vertices and pick a path of length at most $\ell-1$ from the origin to each such vertex in $H$; putting these together, we get a walk $W_H$ of length at most $(2\ell - 2)(2\ell-1)^d$ in $H$ which starts at the origin, ends at the origin, and visits every vertex at most distance $\ell - 1$ from the origin; furthermore, by choosing $N$ sufficiently large, we may assume that $ (2\ell - 2)(2\ell-1)^d \le \delta N - n$.

For each $H$-counter $v$, let $A_v$ be the event that, in the next $4 \delta N -n$ steps, $v$ visits every vertex at most distance $\ell - 1$ from the origin. If no $A_v$ occurs, for any $H$ as above, we again declare the stage to have \emph{failed}. Conditional on the positions of the $H$-counters at the start of this block, the events $A_v$ are independent of one another. Now, fix these `starting positions' of the $\tfrac{1}{6}(1+c)^{N-4\delta N}$ $H$-counters. For each $H$-counter $v$, there is a path of length at most $3\delta N$ from its starting position to the origin. Let $B_v$ be the event that $v$ strictly follows this path, then the walk $W_H$ and then never leaves $B_H(0,1)$. Clearly $B_v$ implies $A_v$, and so $\P[A_v] \ge \P[B_v] \ge (2d)^{-4\delta N}$. Hence, conditional on the starting positions of the $H$-counters, the probability that no $A_v$ occurs is at most
	\[(1 - (2d)^{- 4\delta N})^{\tfrac{1}{6} (1+c)^{N - 4 \delta N}} \le \exp \left( - \frac{1}{6} (2d)^{-4\delta N}  (1+c)^{N - 4 \delta N}\right).\]
Since the bound above is independent of the starting positions of the $H$-counters, it also holds without conditioning for any fixed $H$ as above.

In summary, we declare the $\ell$th stage to have failed if one of the following happens.
\begin{enumerate}
	\item In the first block, there is some $G$ contained in $[-n,n]^d$ whose representative particle $p_G$ does not branch sufficiently. This happens with probability at most 
	\begin{equation}\label{f1}2^{d(2n+1)^d} \exp({- c'\delta N}).\end{equation}
	\item In the second block, there is some $G$ contained in $[-n,n]^d$ for which some $q \in P_G$ does not branch sufficiently. This happens with probability at most 
	\begin{equation}\label{f2}2^{d(2n+1)^d} (1+c)^{\delta N} \exp \left( - c' (N - 5\delta N) \right).\end{equation}
	\item In the second block, there is some $H$ contained in $[-N,N]^d$ for which too few particles in the corresponding set $P_{H'}$ are $H$-good. This happens with probability at most 
	\begin{equation}\label{f3}2^{d(2N+1)^d} \exp \left( - \frac{1}{16} (1+c)^{\delta N} \right).\end{equation}
	\item In the third block, there is some $H$ contained in $[-N,N]^d$ for which no $H$-counter visits every vertex at graph distance most $\ell-1$ from the origin during steps $n+N-4\delta N$ through $N$ in $H$. This happens with probability at most 
	\begin{equation}\label{f4}2^{d(2N+1)^d} \exp \left( - \frac{1}{6} (2d)^{-4\delta N}  (1+c)^{N - 4 \delta N}\right). \end{equation}
\end{enumerate}

We now finish the proof as follows. By the union bound, we conclude that the probability of the $\ell$th stage being declared a failure is at most the sum of the estimates in~\eqref{f1},~\eqref{f2},~\eqref{f3} and~\eqref{f4}; by the choice of $\delta$, for fixed $n$, this sum tends to 0 as $N \to \infty$. We choose $N$ large enough to both ensure that the bounds above hold and to make the probability of the $\ell$th stage being declared a failure at most $2^{-\ell}$, and as mentioned earlier, we set $T_{\ell} = N$. In the case of success, for each $H \subset \Z^d$ contained in the box $[-T_{\ell}, T_{\ell}]^d$, we pick an arbitrary particle that walked the chosen path above and take that particle to be $p_H$, whereas in the case of failure, we choose representative particles arbitrarily. The Borel--Cantelli lemma now implies that almost surely, there are only finitely many stages that fail, so for every subgraph, there is a particle that visits every reachable vertex infinitely often.
\end{proof}
\section{Conclusion}\label{s:conc}
We have shown that a countable collection of independent simple random walks in two dimensions can almost surely be made transient by dropping to a suitable random two-dimensional subgraph; on the other hand, in any number of dimensions, a branching random walk is almost surely recurrent on every subgraph, in the sense that some branch is recurrent on each subgraph. Natural intermediate questions arise from considering the dynamic random walk $\WW_d^{(t)}$ mentioned earlier. This object was introduced by Benjamini, H\"aggstr\"om, Peres and Steif~\citep{dyn}, who showed that in three or four dimensions (i.e., when $d \in \{3,4\}$), there is almost surely some time $T$ such that $\WW_d^{(T)}$ is recurrent; while, in five or more dimensions, almost surely the walk is transitive at all times. In two dimensions, Hoffman~\citep{sens} showed that there is almost surely a time $T$ such that the walk is $\WW_2^{(T)}$ transitive; see also~\citep{AH}. In the light of these facts, the following question seems of interest.
\begin{problem}
Let $d\in\{2,3,4\}$, and let $\WW_d^{(t)}$ be a dynamic random walk on $\Z^d$. Is there almost surely a (random) subgraph $H \subset \Z^d$ on which $\WW_d^{(t)}$ is transitive for every time $t \ge 0$? 
\end{problem}
It seems plausible that the answer is positive in four dimensions and negative in two dimensions; we are not prepared to offer a guess in three dimensions, however. 

Another interesting question concerns paths. It is clear that for any path $P$ in ${\mathbb Z}^d$ through the origin, a random walk on $P$ is almost surely recurrent. In Section~\ref{s:proof}, we noted that when one attempts to build a (random) path in two dimensions that greedily forces the random walk north and east, the resulting induced walk is almost surely recurrent; the induced walk exhibits some drift (compared to a fixed path) but only at the end points, and this is not enough to make it transient. This suggests the following question.
\begin{problem}
Let $\WW$ be a simple random walk on $\Z^d$.  Is it almost surely the case that $\WW_P$ is recurrent for every path $P \subset \Z^d$ through the origin?
\end{problem}

In the cases where we can force a simple random walk to be transient, what can we say about its escape velocity? In dimensions $d \ge 3$, it was shown in~\citep{DE,ET} that a simple random walk on $\Z^d$ escapes at a rate of about $\sqrt n /\log^{c_d+o(1)} n$. What can be said in our context?
\begin{problem}
Fix $d \ge 2$.  What is the supremum of $\alpha$ such that for a random walk $\WW$ on $\Z^d$, we can almost surely choose a subgraph $H \subset \Z^d$ such that the walk escapes to infinity at rate at least $n^\alpha$? 
\end{problem}
As a first step towards this problem, it would already be interesting to know if we can get a random walk to escape with linear velocity in high dimensions.

Finally, a fundamental problem in this context, and one of our original motivations for treating the problem considered here, comes from the theory of universal traversal sequences. Call an infinite word $\mathcal{Z}$ on the alphabet $\{\xx, -\xx, \yy, -\yy \}$ a \emph{universal traversal sequence} for $\Z^2$ if $\mathcal{Z}_G$ is recurrent for every $G \subset \Z^2$. The following basic question raised by Spink~\citep{spink} remains wide open.
\begin{problem}
	Does there exist a universal traversal sequence for $\Z^2$?
\end{problem}
David and Tiba~\citep{mar} recently found deterministic constructions of traversal sequences handling a reasonably large class of (but not all) subgraphs of $\Z^2$. However, in general, the most efficient methods that we know of to construct universal traversal sequences all involve choosing a long enough traversal sequence at random; our main result rules out this standard construction on the square lattice. Either answer to the above existence question, positive or negative, would be very interesting.

\section*{Acknowledgements}
The first author was supported by SNF grant 175505, the fourth author was partially supported by NSF grant DMS-1800521, and the sixth author was supported by a Leverhulme Trust Research Fellowship.

The fourth author would also like to thank Yuval Peres, Perla Sousi and Peter Winkler for interesting discussions.

\section*{Note added in proof}
After this manuscript was completed and circulated, it was brought to our attention that the existence of exceptional subgraphs for a single simple random walk has independently been established by Balister, Bollob\'as, Leader and Walters~\citep{imre}.

\bibliographystyle{amsplain}
\bibliography{excep_rw}

\end{document}